\pdfoutput=1
\documentclass[11pt]{amsart}

\usepackage[T1]{fontenc}
\usepackage{amsmath,amssymb,amsthm}
\usepackage{enumerate}
\usepackage[hidelinks]{hyperref}
\newcommand{\Pow}{\mathcal{P}}
\newcommand{\Diag}{\Delta}
\newcommand{\sU}{\mathcal{U}}
\newcommand{\sV}{\mathcal{V}}

\newcommand{\SE}{\operatorname{SE}}
\newcommand{\Rel}{\operatorname{Rel}_{s}}
\newcommand{\sprod}{\mathbin{\times_{s}}}
\newcommand{\TU}{\mathcal{T}_{\sU}}
\newcommand{\TV}{\mathcal{T}_{\sV}}
\newcommand{\Tprod}{\mathcal{T}_{\mathrm{prod}}}
\newcommand{\Tbox}{\mathcal{T}_{\mathrm{box}}}
\providecommand{\coloneqq}{\mathrel{:=}}

\theoremstyle{plain}
\newtheorem{theorem}{Theorem}
\newtheorem{lemma}[theorem]{Lemma}
\newtheorem{proposition}[theorem]{Proposition}
\theoremstyle{definition}
\newtheorem{definition}[theorem]{Definition}
\newtheorem{example}[theorem]{Example}

\hypersetup{
  pdftitle={Soft Uniform Spaces and Soft Uniform Continuity: Induced Topologies, Separation, and Compactness-Type Results},
  pdfauthor={S. Ray},
  pdfsubject={General Topology; soft uniform spaces},
  pdfkeywords={soft set, soft relation, soft uniformity, soft-element topology, uniform continuity, compactness, completeness}
}

\title[Soft Uniform Spaces and Soft Uniform Continuity]
{Soft Uniform Spaces and Soft Uniform Continuity:\\
Induced Topologies, Separation, and Compactness-Type Results}

\author{S. Ray}
\address{Department of Mathematics, Visva-Bharati University, Santiniketan, India}
\email{subhasis.ray@visva-bharati.ac.in}
\date{July 2026}
\subjclass[2020]{54E15, 54A40, 03E72}
\keywords{Soft set, soft relation, soft uniformity, induced soft topology, soft-element topology, soft uniform continuity, compactness, completeness}

\begin{document}

\begin{abstract}
Soft uniform structures provide a way to describe uniform closeness in a parameterized setting.
Over a fixed parameter set, entourages are treated as soft relations, and a soft uniformity is defined by axioms parallel to the classical entourage axioms.
Each soft uniformity induces two related topologies: a sectionwise soft topology \(\tau_{\sU}\) on \(F\), and an ordinary soft-element topology \(\TU\) on \(\SE(F)\).
Both constructions are described. The space \((\SE(F),\TU)\) is Hausdorff, equivalently \(T_1\), exactly when the soft uniformity is separated, and it is regular for every soft uniformity.
Soft uniformly continuous mappings are then studied, and a Heine--Cantor type theorem is proved when the soft-element topology of the domain is compact.
Finally, total boundedness and completeness are formulated for the canonical soft-element uniform space \((\SE(F),\sU_{\SE})\), and compactness of \((\SE(F),\TU)\) is shown to imply both properties.
Examples relate the theory to uniformities generated by classical structures and illustrate the role of the parameter set.
\end{abstract}

\maketitle

\section{Introduction}

Soft set theory was introduced by Molodtsov \cite{Molodtsov1999} as a parameterized method for studying uncertainty. 
In a soft set, each parameter determines a subset of a fixed universe, and the whole object is handled through this family of sections. 
The basic operations on soft sets were developed by Maji, Biswas and Roy \cite{Maji2003}. 
Later work introduced many familiar mathematical structures in soft form, including soft groups \cite{Aktas2007}, soft topological spaces \cite{Shabir2011}, and soft metric spaces \cite{Das2013}. 
The language of soft elements gives another useful way to connect soft structures with ordinary mathematical structures \cite{GoldarRay2017,GoldarRay2018,GoldarRay2019}.

Uniform spaces provide a general setting for uniform continuity, Cauchy filters, completeness and total boundedness. 
They contain metric uniformities as important examples, but they do not require a particular metric \cite{Bourbaki1966}. 
It is therefore natural to study uniform ideas for soft sets.

Soft uniform structures have already appeared in several forms. 
For example, \"{O}zbak\i r and Demir \cite{OzbakirDemir2015} studied soft uniformity using soft points, and Demir, \"{O}zbak\i r and Y\i ld\i z \cite{Demir2016} considered fixed soft element theorems in SE-uniform spaces. Related approaches based on product soft sets and soft uniform neighbourhoods were developed by \"{O}zt\"{u}rk \cite{Ozturk2015} and \"{O}zt\"{u}rk--Yolcu \cite{Ozturk2016}.

In this paper we work over a fixed parameter set \(E\). 
A soft relation \(R\) on a soft set \(F\) is given coordinatewise by
\[
 R(e)\subseteq F(e)\times F(e),\qquad e\in E.
\]
Such a relation also determines an ordinary relation on the set
\[
 \SE(F)=\prod_{e\in E}F(e)
\]
of all soft elements. 
We denote this induced relation by
\[
 \SE(R)=\{(\mathbf{x},\mathbf{y})\in\SE(F)\times\SE(F):
(x_e,y_e)\in R(e)\text{ for every }e\in E\}.
\]
This passage from \(R\) to \(\SE(R)\) is useful because uniform continuity, convergence and Cauchy filters naturally involve pairs of soft elements.

We define a soft uniformity as a proper filter of soft relations satisfying the usual diagonal, inverse and square-root conditions. 
Each parameter section then carries an ordinary uniformity. 
At the same time, the relations \(\SE(U)\), where \(U\) ranges over the soft entourages, generate an ordinary uniformity on \(\SE(F)\). 
This gives two related topological objects: the induced soft topology on \(F\), and the soft-element topology on \(\SE(F)\).

Separation and regularity results are then established for these induced topologies. 
In particular, separatedness of the soft uniformity is equivalent to the \(T_1\) property of the induced soft-element topology, and the induced topology is regular. 
Soft uniformly continuous mappings are then considered. 
A soft mapping is soft uniformly continuous exactly when the induced map between soft-element uniform spaces is uniformly continuous.

Compactness is considered through the canonical soft-element topology. 
This compactness notion yields a soft Heine--Cantor theorem: a soft continuous mapping from a soft compact domain is soft uniformly continuous. 
Soft total boundedness is defined by finite soft entourage covers and soft completeness through soft Cauchy filters; soft compactness is shown to imply both properties.

The paper is organized as follows. 
Section~2 recalls soft sets, soft elements, soft relations and soft continuity. 
Section~3 develops soft uniformities and the induced topologies, including separation and regularity. 
Section~4 studies soft uniform continuity and proves a Heine--Cantor theorem. 
Section~5 treats soft total boundedness and soft completeness. 
Section~6 gives examples, and Section~7 concludes the paper.
\section{Preliminaries}

\subsection{Soft sets and soft elements}
Throughout, \(X\) is a nonempty universe and \(E\) is a nonempty parameter set.

\begin{definition}[Soft set {\cite{Molodtsov1999}}]
A \emph{soft set} over \(X\), with respect to \(E\), is a mapping \(F\colon E\to\Pow(X)\). We write \((F,E)\) for the corresponding soft set and call \(F(e)\) the \emph{\(e\)-section} of \(F\).
\end{definition}

\begin{definition}[Soft element {\cite{GoldarRay2018}}]
Let \((F,E)\) be a soft set. A \emph{soft element} of \(F\) is a mapping \(\mathbf{x}\colon E\to X\) such that \(x_e\coloneqq\mathbf{x}(e)\in F(e)\) for every \(e\in E\). The ordinary set of all soft elements is denoted by
\[
 \SE(F)\coloneqq\prod_{e\in E}F(e).
\]
Bold symbols \(\mathbf{x},\mathbf{y},\mathbf{z}\) always denote members of \(\SE(F)\), while \(x_e,y_e,z_e\) denote their coordinates.
\end{definition}

From this point onward we assume \(\SE(F)\neq\varnothing\) for every soft set under consideration. Fixing one element of \(\SE(F)\) then permits any prescribed value in one section to be extended to a soft element by leaving all other coordinates fixed.

\begin{definition}[Soft subset and soft product]
Let \(F\colon E\to\Pow(X)\) and \(G\colon E\to\Pow(Y)\) be soft sets over the same parameter set. A soft set \(A\) is a \emph{soft subset} of \(F\), written \(A\sqsubseteq F\), if \(A(e)\subseteq F(e)\) for every \(e\in E\). The \emph{soft product} \(F\sprod G\) is defined by
\[
 (F\sprod G)(e)=F(e)\times G(e),\qquad e\in E.
\]
Unions, intersections, and relative complements of soft subsets are taken parameterwise.
\end{definition}

For a soft subset \(A\sqsubseteq F\), we use the same soft-element notation
\[
 \SE(A)=\{\mathbf{x}\in\SE(F):x_e\in A(e)\text{ for every }e\in E\}
 =\prod_{e\in E}A(e).
\]
Thus \(\SE(A)\) is an ordinary subset of \(\SE(F)\). Not every subset of \(\SE(F)\) is of this rectangular form.

\subsection{Soft relations}
\begin{definition}[Soft relation]
A \emph{soft relation} on \(F\) is a soft subset \(R\sqsubseteq F\sprod F\); equivalently,
\[
 R(e)\subseteq F(e)\times F(e),\qquad e\in E.
\]
The set of all soft relations on \(F\), ordered by parameterwise inclusion, is denoted by \(\Rel(F)\).
\end{definition}

\begin{definition}[Diagonal, inverse, composition, and soft-element relation]
Let \(R,S\in\Rel(F)\).
\begin{enumerate}[(i)]
\item The diagonal soft relation is
\[
 \Diag_F(e)=\{(a,a):a\in F(e)\}.
\]
\item The inverse is \(R^{-1}(e)=\{(b,a):(a,b)\in R(e)\}\).
\item The composition is defined parameterwise by
\[
\begin{aligned}
 (a,c)\in(R\circ S)(e)
 &\quad\Longleftrightarrow\quad
 \text{there exists }b\in F(e)\text{ such that}\\
 &\hspace{42mm}(a,b)\in S(e)\text{ and }(b,c)\in R(e).
\end{aligned}
\]
\item The soft-element relation determined by \(R\) is the ordinary relation
\[
 \SE(R)=\{(\mathbf{x},\mathbf{y})\in\SE(F)^2:(x_e,y_e)\in R(e)\text{ for every }e\in E\}.
\]
\end{enumerate}
\end{definition}

\begin{lemma}
For \(R,S\in\Rel(F)\),
\begin{enumerate}[(i)]
\item \(\SE(\Diag_F)=\Diag_{\SE(F)}\);
\item \(\SE(R\cap S)=\SE(R)\cap\SE(S)\);
\item \(\SE(R^{-1})=\SE(R)^{-1}\);
\item \(\SE(R)\circ\SE(S)\subseteq\SE(R\circ S)\).
\end{enumerate}
\end{lemma}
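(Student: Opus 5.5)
All four parts are verified by unwinding the definitions coordinatewise: \(\SE(\cdot)\) is defined by a condition that must hold for every \(e\in E\), and the operations on soft relations are defined parameterwise. For each identity I would prove both inclusions by element-chasing on pairs \((\mathbf{x},\mathbf{y})\in\SE(F)^2\). For (iv) only one inclusion is claimed.

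For (i), \((\mathbf{x},\mathbf{y})\in\SE(\Diag_F)\) means \((x_e,y_e)\in\Diag_F(e)\), i.e.\ \(x_e=y_e\), for every \(e\). This holds exactly when \(\mathbf{x}=\mathbf{y}\) as mappings on \(E\), i.e.\ when \((\mathbf{x},\mathbf{y})\in\Diag_{\SE(F)}\).

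For (ii), \((x_e,y_e)\in R(e)\cap S(e)\) for all \(e\) is equivalent to the conjunction of ``\((x_e,y_e)\in R(e)\) for all \(e\)'' and ``\((x_e,y_e)\in S(e)\) for all \(e\)'', because a universal quantifier distributes over conjunction.

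For (iii), \((\mathbf{x},\mathbf{y})\in\SE(R^{-1})\) means \((y_e,x_e)\in R(e)\) for every \(e\). That is exactly \((\mathbf{y},\mathbf{x})\in\SE(R)\), i.e.\ \((\mathbf{x},\mathbf{y})\in\SE(R)^{-1}\).

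For (iv), take \((\mathbf{x},\mathbf{z})\in\SE(R)\circ\SE(S)\). By the composition convention there is \(\mathbf{y}\in\SE(F)\) with \((\mathbf{x},\mathbf{y})\in\SE(S)\) and \((\mathbf{y},\mathbf{z})\in\SE(R)\). For each \(e\), the coordinate \(b=y_e\in F(e)\) then witnesses \((x_e,z_e)\in(R\circ S)(e)\), so \((\mathbf{x},\mathbf{z})\in\SE(R\circ S)\).

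The only point that needs care is (iv). First, the composition of ordinary relations on \(\SE(F)\) must use the same order convention as the soft composition, so that \(S\) is applied first. Second, the reverse inclusion is not claimed and can fail in general. Membership \((\mathbf{x},\mathbf{z})\in\SE(R\circ S)\) gives, for each \(e\), some witness \(b_e\in F(e)\). Assembling these into a single \(\mathbf{y}=(b_e)_{e\in E}\) would require choosing one witness per parameter, and I would simply remark that this direction is not needed. There is no genuine obstacle: the whole lemma is bookkeeping.
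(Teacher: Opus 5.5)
Your proof is correct and follows the paper's argument: (i)--(iii) are checked coordinate by coordinate, and (iv) uses the intermediate soft element \(\mathbf{y}\), whose coordinate \(y_e\) witnesses \((x_e,z_e)\in(R\circ S)(e)\), with the same order convention for composition. One correction to your aside: in ZFC the reverse inclusion in (iv) does not fail, because choosing one witness \(b_e\) for each \(e\) gives \(\mathbf{y}=(b_e)_{e\in E}\in\SE(F)\), so (iv) is in fact an equality (it can fail only in a setting without choice), though this does not affect the lemma as stated.
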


\begin{proof}
The first three statements follow coordinate by coordinate. If \((\mathbf{x},\mathbf{z})\in \SE(R)\circ\SE(S)\), there is \(\mathbf{y}\in\SE(F)\) such that \((x_e,y_e)\in S(e)\) and \((y_e,z_e)\in R(e)\) for every \(e\). Hence \((x_e,z_e)\in(R\circ S)(e)\) for every \(e\), which proves (iv).
\end{proof}

\subsection{Soft topologies and soft continuity}
All operations on soft subsets are understood parameterwise.

\begin{definition}[Soft topology {\cite{Shabir2011}}]
A family \(\tau\) of soft subsets of \(F\) is a \emph{soft topology} on \(F\) if it contains the empty soft set and \(F\), is closed under arbitrary soft unions, and is closed under finite soft intersections. Its members are called soft open sets.
\end{definition}

\begin{definition}[Soft mapping and sectionwise soft continuity]
Let \(F\colon E\to\Pow(X)\) and \(G\colon E\to\Pow(Y)\). 
A \emph{soft mapping} \(f\colon F\to G\) over the fixed parameter set \(E\) is a family of mappings
\[
f_e\colon F(e)\to G(e),\qquad e\in E.
\]
It induces
\[
\SE(f)\colon\SE(F)\to\SE(G),\qquad
\SE(f)(\mathbf{x})_e=f_e(x_e).
\]

A soft topology \(\tau_F\) on \(F\) is called \emph{sectionwise} if, for each \(e\in E\), there is an ordinary topology \(\tau_F(e)\) on \(F(e)\) such that
\[
\tau_F
=
\{A\sqsubseteq F:A(e)\in\tau_F(e)
\text{ for every }e\in E\}.
\]
A sectionwise soft topology \(\tau_G\) on \(G\) is defined in the same way, using an ordinary topology \(\tau_G(e)\) on \(G(e)\) for each \(e\in E\).

The soft mapping \(f\) is \emph{sectionwise soft continuous} if each mapping
\[
f_e\colon
\bigl(F(e),\tau_F(e)\bigr)
\longrightarrow
\bigl(G(e),\tau_G(e)\bigr)
\]
is continuous. Equivalently, for every \(B\in\tau_G\), the soft inverse image defined by
\[
f^{-1}(B)(e)=f_e^{-1}(B(e)),\qquad e\in E,
\]
belongs to \(\tau_F\).
\end{definition}

The equivalence follows by testing one coordinate at a time. 
For \(O_e\in\tau_G(e)\), define \(B(e)=O_e\) and
\(B(d)=G(d)\) for every \(d\neq e\).

Throughout the paper, bold letters such as \(\mathbf{x}\) denote soft elements, while \(x_e\) denotes the \(e\)-coordinate of \(\mathbf{x}\).

\section{Soft Uniformities and the Induced Topologies}

\subsection{Soft uniformities}
The discussion begins with the filter structure.

\begin{definition}[Filter of soft relations]
A family \(\mathcal F\subseteq\Rel(F)\) is a \emph{proper filter of soft relations} if
\begin{enumerate}[(i)]
\item \(\mathcal F\neq\varnothing\) and the soft relation with all sections empty does not belong to \(\mathcal F\);
\item \(R,S\in\mathcal F\) implies \(R\cap S\in\mathcal F\);
\item \(R\in\mathcal F\) and \(R\sqsubseteq T\in\Rel(F)\) imply \(T\in\mathcal F\).
\end{enumerate}
\end{definition}

\begin{definition}[Soft uniformity and soft uniform space]
A \emph{soft uniformity} on \(F\) is a proper filter \(\sU\) of soft relations satisfying
\begin{enumerate}[(U1)]
\item \(\Diag_F\sqsubseteq U\) for every \(U\in\sU\);
\item \(U^{-1}\in\sU\) whenever \(U\in\sU\);
\item for every \(U\in\sU\) there exists \(V\in\sU\) such that \(V\circ V\sqsubseteq U\).
\end{enumerate}
The pair \((F,\sU)\) is a \emph{soft uniform space}, and members of \(\sU\) are called \emph{soft entourages}.
\end{definition}

Thus finite intersection and upward closure are part of the filter, while (U1)--(U3) are the diagonal, inverse, and square-root conditions.

\begin{definition}[Base of a soft uniformity]
A nonempty family \(\mathcal B\subseteq\Rel(F)\) is a \emph{base of soft entourages} if
\begin{enumerate}[(B1)]
\item \(\Diag_F\sqsubseteq B\) for every \(B\in\mathcal B\);
\item for \(B_1,B_2\in\mathcal B\) there exists \(B_3\in\mathcal B\) with \(B_3\sqsubseteq B_1\cap B_2\);
\item for every \(B\in\mathcal B\) there exists \(C\in\mathcal B\) with \(C^{-1}\sqsubseteq B\);
\item for every \(B\in\mathcal B\) there exists \(C\in\mathcal B\) with \(C\circ C\sqsubseteq B\).
\end{enumerate}
\end{definition}

\begin{proposition}
A nonempty family \(\mathcal B\subseteq\Rel(F)\) is a base of soft entourages if and only if
\[
 \sU_{\mathcal B}=\{U\in\Rel(F):B\sqsubseteq U\text{ for some }B\in\mathcal B\}
\]
is a soft uniformity. Every soft uniformity is a base for itself.
\end{proposition}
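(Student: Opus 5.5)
The proof is a direct verification in both directions. Throughout we use two elementary facts: parameterwise inclusion is monotone under inverse (\(R\sqsubseteq S\Rightarrow R^{-1}\sqsubseteq S^{-1}\)) and under composition (\(R\sqsubseteq R',\,S\sqsubseteq S'\Rightarrow R\circ S\sqsubseteq R'\circ S'\)). Both hold coordinatewise because all operations in \(\Rel(F)\) are defined section by section.

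\emph{Sufficiency.} Assume \(\mathcal B\) satisfies (B1)--(B4); we check each requirement for \(\sU_{\mathcal B}\).
\begin{enumerate}[(a)]
\item Nonemptiness holds because \(\mathcal B\neq\varnothing\) and \(\mathcal B\subseteq\sU_{\mathcal B}\).
\item Upward closure is immediate from transitivity of \(\sqsubseteq\).
\item For intersections, if \(B_1\sqsubseteq U\) and \(B_2\sqsubseteq V\), then (B2) gives \(B_3\sqsubseteq B_1\cap B_2\sqsubseteq U\cap V\).
\item (U1) holds because \(\Diag_F\sqsubseteq B\sqsubseteq U\) by (B1).
\item For (U2), if \(B\sqsubseteq U\), choose \(C\in\mathcal B\) with \(C^{-1}\sqsubseteq B\) by (B3). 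Then \(C=(C^{-1})^{-1}\sqsubseteq B^{-1}\sqsubseteq U^{-1}\), so \(U^{-1}\in\sU_{\mathcal B}\).
\item For (U3), if \(B\sqsubseteq U\), pick \(C\) by (B4). Then \(V=C\in\sU_{\mathcal B}\) and \(V\circ V\sqsubseteq B\sqsubseteq U\).
\end{enumerate}

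The step needing care is properness: the soft relation with all sections empty must not lie in \(\sU_{\mathcal B}\). Here we use the standing assumption \(\SE(F)\neq\varnothing\), which forces every \(F(e)\neq\varnothing\). Hence \(\Diag_F(e)\neq\varnothing\) for each \(e\). Every member of \(\sU_{\mathcal B}\) contains some \(B\supseteq\Diag_F\), so it has no empty section, and in particular is not the empty soft relation. (Even \(E\neq\varnothing\) and a single nonempty section would suffice.)

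\emph{Necessity.} Assume \(\sU_{\mathcal B}\) is a soft uniformity.
\begin{enumerate}[(a)]
\item (B1): each \(B\in\mathcal B\) lies in \(\sU_{\mathcal B}\), so \(\Diag_F\sqsubseteq B\) by (U1).
\item (B2): \(B_1\cap B_2\in\sU_{\mathcal B}\), so by definition some \(B_3\in\mathcal B\) satisfies \(B_3\sqsubseteq B_1\cap B_2\).
\item (B3): \(B^{-1}\in\sU_{\mathcal B}\), so some \(C\in\mathcal B\) has \(C\sqsubseteq B^{-1}\), whence \(C^{-1}\sqsubseteq B\).
\item (B4): (U3) gives \(V\in\sU_{\mathcal B}\) with \(V\circ V\sqsubseteq B\). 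Choose \(C\in\mathcal B\) with \(C\sqsubseteq V\); monotonicity of composition gives \(C\circ C\sqsubseteq V\circ V\sqsubseteq B\).
\end{enumerate}

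\emph{The final claim.} If \(\sU\) is a soft uniformity, then \(\sU_{\sU}=\sU\) by upward closure, since every \(U\in\sU\) satisfies \(U\sqsubseteq U\) and anything containing a member of \(\sU\) already lies in \(\sU\). So \(\sU\) is a base for itself by the equivalence just proved. It can also be checked directly: (B1)--(B4) follow from (U1), the filter axioms, (U2) with \(C=U^{-1}\), and (U3).
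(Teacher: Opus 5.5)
Your proof is correct and follows the same direct verification as the paper for the sufficiency direction, and it is more careful in deriving properness explicitly from the standing assumption \(\SE(F)\neq\varnothing\) (the paper simply asserts that (B1) and (B2) give a proper filter). For the converse you actually do more than the paper: its ``conversely'' only checks that a soft uniformity satisfies (B1)--(B4), which is the final sentence of the statement, whereas you prove the necessity direction for an arbitrary \(\mathcal B\) by pulling each axiom of \(\sU_{\mathcal B}\) back to a member of \(\mathcal B\) lying below it.
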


\begin{proof}
Assume (B1)--(B4). Conditions (B1) and (B2) give a proper filter whose members contain \(\Diag_F\). If \(B\sqsubseteq U\) and \(C^{-1}\sqsubseteq B\) as in (B3), then \(C\sqsubseteq U^{-1}\), so \(U^{-1}\in\sU_{\mathcal B}\). If \(C\circ C\sqsubseteq B\sqsubseteq U\), then \(C\) is a square-root of \(U\). Hence \(\sU_{\mathcal B}\) is a soft uniformity. Conversely, a soft uniformity itself satisfies (B1)--(B4) by the filter, inverse, and square-root axioms.
\end{proof}

\begin{proposition}
For each \(e\in E\), the family
\[
 \sU_e\coloneqq\{U(e):U\in\sU\}
\]
is a classical uniformity on \(F(e)\).
\end{proposition}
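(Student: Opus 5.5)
We verify the classical uniformity axioms for \(\sU_e\) on \(F(e)\) directly, transferring each soft axiom through evaluation at the single parameter \(e\). Since all operations on soft relations (intersection, inverse, composition, diagonal) are defined parameterwise, we have \((U\cap V)(e)=U(e)\cap V(e)\), \(U^{-1}(e)=U(e)^{-1}\), \((V\circ V)(e)=V(e)\circ V(e)\) and \(\Diag_F(e)=\Diag_{F(e)}\). These identities carry most of the argument.

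First, \(\sU_e\) is nonempty because \(\sU\) is nonempty. By (U1), every \(U(e)\) contains \(\Diag_{F(e)}\). Since \(F(e)\neq\varnothing\) (it contains a coordinate of a soft element), each \(U(e)\) is nonempty, so \(\varnothing\notin\sU_e\). For finite intersections, given \(U(e),V(e)\in\sU_e\), the soft relation \(U\cap V\) lies in \(\sU\), and its \(e\)-section is \(U(e)\cap V(e)\). Axioms (U2) and (U3) transfer in the same way: \(U(e)^{-1}=U^{-1}(e)\in\sU_e\), and a soft square root \(V\) of \(U\) gives \(V(e)\circ V(e)=(V\circ V)(e)\subseteq U(e)\).

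The step needing care is upward closure, since \(\sU_e\) is defined as a set of sections rather than as a filter. Suppose \(U(e)\subseteq W\subseteq F(e)\times F(e)\). Define the soft relation \(T\) by \(T(e)=W\) and \(T(d)=U(d)\) for \(d\neq e\). Then \(U\sqsubseteq T\in\Rel(F)\), so \(T\in\sU\) by filter axiom (iii). Hence \(W=T(e)\in\sU_e\).

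With these steps, \(\sU_e\) is a filter on \(F(e)\times F(e)\) whose members contain the diagonal, and it is closed under inverses and admits square roots, so it is a classical uniformity on \(F(e)\). I do not expect a genuine obstacle. The only points to watch are the upward-closure construction and the nonemptiness of the sections, which rests on the standing assumption \(\SE(F)\neq\varnothing\).
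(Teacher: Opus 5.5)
Your proposal is correct and follows essentially the same route as the paper: the diagonal, intersection, inverse and square-root properties pass to the \(e\)-section because all soft operations are parameterwise, and upward closure is handled by changing a single section. The only difference is cosmetic: you fill the other sections with \(U(d)\), while the paper uses \(F(d)^2\). Either choice works, since only \(U\sqsubseteq T\) is needed.
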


\begin{proof}
The diagonal, finite-intersection, inverse, and square-root properties pass to the \(e\)-section. For upward closure, suppose \(U(e)\subseteq H\subseteq F(e)^2\). Define \(W\in\Rel(F)\) by \(W(e)=H\) and \(W(d)=F(d)^2\) for \(d\neq e\). Then \(U\sqsubseteq W\), so \(W\in\sU\) and \(H=W(e)\in\sU_e\).
\end{proof}

The next theorem supplies a single uniform space on the set of soft elements.

\begin{theorem}
\label{thm:14}
Let \((F,\sU)\) be a soft uniform space and define
\[
 \sU_{\SE}=\{W\subseteq\SE(F)^2:\SE(U)\subseteq W\text{ for some }U\in\sU\}.
\]
Then \(\sU_{\SE}\) is a classical uniformity on \(\SE(F)\), with \(\{\SE(U):U\in\sU\}\) as an entourage base.
\end{theorem}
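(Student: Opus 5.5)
The plan is to verify that \(\mathcal B_{\SE}\coloneqq\{\SE(U):U\in\sU\}\) satisfies the classical entourage-base axioms on \(\SE(F)\). Once this is done, \(\sU_{\SE}\) is by definition the family of supersets of base members, and the classical analogue of the base proposition above shows that \(\sU_{\SE}\) is a uniformity with \(\mathcal B_{\SE}\) as a base. Each axiom will be transferred from \(\sU\) through the lemma on soft-element relations, using that \(\SE(\cdot)\) is monotone: if \(R\sqsubseteq S\) then \(\SE(R)\subseteq\SE(S)\).

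First, the diagonal condition. For \(U\in\sU\), (U1) gives \(\Diag_F\sqsubseteq U\). By part (i) of the lemma and monotonicity, \(\Diag_{\SE(F)}=\SE(\Diag_F)\subseteq\SE(U)\). In particular every \(\SE(U)\) is nonempty, since \(\SE(F)\neq\varnothing\), so \(\sU_{\SE}\) is a proper nonempty filter.

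Second, intersections. For \(U,V\in\sU\) we have \(U\cap V\in\sU\), and \(\SE(U\cap V)=\SE(U)\cap\SE(V)\) by part (ii). Hence \(\mathcal B_{\SE}\) is closed under finite intersections, and \(\sU_{\SE}\) is closed under finite intersections. Upward closure of \(\sU_{\SE}\) holds by definition.

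Third, inverses. For \(U\in\sU\), (U2) gives \(U^{-1}\in\sU\), and part (iii) gives \(\SE(U^{-1})=\SE(U)^{-1}\). So if \(\SE(U)\subseteq W\), then \(\SE(U^{-1})\subseteq W^{-1}\), which puts \(W^{-1}\) in \(\sU_{\SE}\).

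Fourth, square roots. Given \(U\in\sU\), use (U3) to choose \(V\in\sU\) with \(V\circ V\sqsubseteq U\). By part (iv) and monotonicity,
\[
\SE(V)\circ\SE(V)\subseteq\SE(V\circ V)\subseteq\SE(U).
\]

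Part (iv) is only an inclusion, since the reverse inclusion can fail. It is nevertheless the step I expect to need the most care, because it is exactly the direction the square-root axiom requires. Choosing the intermediate soft element coordinatewise would be needed only for the converse inclusion, and we never use that, so no choice argument enters. With these four checks, \(\sU_{\SE}\) is a classical uniformity on \(\SE(F)\) and \(\mathcal B_{\SE}\) is an entourage base for it.
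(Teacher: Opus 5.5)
Your proposal is correct and follows the paper's proof essentially verbatim: the diagonal, finite-intersection, inverse, and square-root conditions are each transferred from \(\sU\) through parts (i)--(iv) of the soft-relation lemma, and upward closure is built into the definition of \(\sU_{\SE}\). One aside is wrong but harmless: the reverse of (iv) does not fail in ZFC, since choosing an intermediate \(b_e\in F(e)\) for each \(e\) yields a soft element, so \(\SE(R)\circ\SE(S)=\SE(R\circ S)\); your argument uses only the forward inclusion, so this does not affect it.
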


\begin{proof}
Every \(\SE(U)\) contains \(\Diag_{\SE(F)}\). The identity \(\SE(U\cap V)=\SE(U)\cap \SE(V)\) gives finite intersections, and upward closure is built into the definition. Also \(\SE(U^{-1})=(\SE(U))^{-1}\). Finally, if \(V\circ V\sqsubseteq U\), then
\[
 \SE(V)\circ \SE(V)\subseteq\SE(V\circ V)\subseteq \SE(U).
\]
These are the classical entourage axioms.
\end{proof}

\paragraph{Relation with earlier formulations.}
The present setting is close to the classical entourage description of a uniform space, but it is written for soft relations. 
In \cite{OzbakirDemir2015}, soft uniformity is developed by using one-parameter soft points. 
The approaches in \cite{Ozturk2015,Ozturk2016} use product soft sets and soft uniform neighbourhoods, and they also allow mappings between parameter sets. 
Here the parameter set is fixed, and a point of the space is a soft element \(\mathbf{x}\in\SE(F)\), that is, a simultaneous choice of one element from each section. 
This viewpoint leads to the ordinary uniformity \(\sU_{\SE}\) on \(\SE(F)\), which will be used below for continuity, compactness and Cauchy filters.

\subsection{Soft neighbourhoods and induced topology}

\begin{definition}[Soft entourage neighbourhood]
For \(U\in\sU\) and \(\mathbf{x}\in\SE(F)\), define the soft subset \(U[\mathbf{x}]\sqsubseteq F\) by
\[
 U[\mathbf{x}](e)=\{a\in F(e):(x_e,a)\in U(e)\}.
\]
Its realization in \(\SE(F)\) is
\[
 \SE(U[\mathbf{x}])=\SE(U)[\mathbf{x}]
 =\prod_{e\in E}U(e)[x_e].
\]
\end{definition}

\begin{definition}[Topologies induced by a soft uniformity]
Let \(\tau_e\) be the topology induced by \(\sU_e\) on \(F(e)\).
\begin{enumerate}[(i)]
\item The \emph{induced soft topology} is
\[
 \tau_{\sU}=\{O\sqsubseteq F:O(e)\in\tau_e\text{ for every }e\in E\}.
\]
\item The \emph{soft-element topology}, denoted by \(\TU\), is the ordinary topology induced by \(\sU_{\SE}\) on \(\SE(F)\).
\end{enumerate}
\end{definition}

The two objects have different types. The family \(\tau_{\sU}\) is a soft topology consisting of soft subsets of \(F\), whereas \(\TU\) is an ordinary topology on the set \(\SE(F)\). The topological separation, convergence, compactness, and completeness statements below refer to \((\SE(F),\TU)\); total boundedness and the Cauchy condition refer to its underlying uniformity \(\sU_{\SE}\).

\begin{theorem}
The family \(\tau_{\sU}\) is a soft topology on \(F\). Moreover, for each \(\mathbf{x}\in\SE(F)\), the family
\[
 \{\SE(U)[\mathbf{x}]:U\in\sU\}
\]
is a neighbourhood base at \(\mathbf{x}\) in \(\TU\).
\end{theorem}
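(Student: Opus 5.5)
The statement has two independent parts, and I will treat them separately.

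For the first claim, that \(\tau_{\sU}\) is a soft topology, I will use that each \(\tau_e\) is an ordinary topology on \(F(e)\). This holds because \(\sU_e\) is a classical uniformity by the Proposition on sections, and a uniformity induces a topology. The empty soft set has every section \(\varnothing\in\tau_e\), and \(F\) has every section \(F(e)\in\tau_e\), so both belong to \(\tau_{\sU}\). Soft unions and finite soft intersections are computed parameterwise. For a family \(\{O_i\}\subseteq\tau_{\sU}\), the section of the union at \(e\) is \(\bigcup_i O_i(e)\), and the section of a finite intersection at \(e\) is \(\bigcap_i O_i(e)\). Each of these lies in \(\tau_e\) because \(\tau_e\) is a topology. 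So the closure properties reduce coordinatewise to those of the \(\tau_e\), and no further argument is needed.

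For the second claim, I will invoke Theorem~\ref{thm:14}: \(\sU_{\SE}\) is a classical uniformity on \(\SE(F)\) with entourage base \(\{\SE(U):U\in\sU\}\). In a classical uniform space \((Y,\mathcal W)\), the induced topology is defined by declaring \(O\) open iff for every \(y\in O\) there is \(W\in\mathcal W\) with \(W[y]\subseteq O\). The standard fact is that \(\{W[y]:W\in\mathcal W\}\) is a neighbourhood base at \(y\). Given this, for any neighbourhood \(N\) of \(\mathbf{x}\) there is \(W\in\sU_{\SE}\) with \(W[\mathbf{x}]\subseteq N\). Choosing \(U\in\sU\) with \(\SE(U)\subseteq W\) gives \(\SE(U)[\mathbf{x}]\subseteq N\). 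Conversely, each \(\SE(U)[\mathbf{x}]\) is of the form \(W[\mathbf{x}]\) with \(W=\SE(U)\in\sU_{\SE}\), so it is a neighbourhood of \(\mathbf{x}\).

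The only point that needs care is that the sets \(W[y]\) really are neighbourhoods, i.e.\ that each contains an open set around \(y\). I will prove this in the usual way with the square-root axiom. Take \(V\in\sU_{\SE}\) with \(V\circ V\subseteq W\), and set \(O=\{z: V'[z]\subseteq W[y]\text{ for some }V'\in\sU_{\SE}\}\). Then \(y\in O\), using \(V[y]\subseteq W[y]\). Moreover \(O\) is open: if \(z\in O\) via \(V'\), choose \(V''\) with \(V''\circ V''\subseteq V'\). For \(w\in V''[z]\) we have \(V''[w]\subseteq V'[z]\subseteq W[y]\), so \(w\in O\). Hence \(V''[z]\subseteq O\). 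Finally \(O\subseteq W[y]\), since each \(z\in O\) satisfies \(z\in V'[z]\subseteq W[y]\) by (U1). Alternatively, this classical fact can simply be cited from \cite{Bourbaki1966}.

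The identity \(\SE(U)[\mathbf{x}]=\SE(U[\mathbf{x}])=\prod_e U(e)[x_e]\) recorded in the definition is not needed for the argument. It only shows that these basic neighbourhoods are rectangular.
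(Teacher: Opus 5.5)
Your proposal is correct and follows the paper's proof. You check the soft-topology axioms parameterwise and reduce the neighbourhood-base claim to the classical fact for the entourage base \(\{\SE(U):U\in\sU\}\) from Theorem~\ref{thm:14}; the only difference is that you also verify that each \(W[y]\) is a neighbourhood, which the paper simply cites as standard.
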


\begin{proof}
The empty soft set and \(F\) belong to \(\tau_{\sU}\), and arbitrary unions and finite intersections remain open in every section; hence \(\tau_{\sU}\) is a soft topology. The second assertion is the standard neighbourhood-base description of the topology induced by the entourage base \(\{\SE(U):U\in\sU\}\) from Theorem~\ref{thm:14}.
\end{proof}

The two induced topologies are related through the section topologies.

\begin{theorem}
\label{thm:18}
Let \(\Tprod\) and \(\Tbox\) be, respectively, the product and box topologies on \(\SE(F)=\prod_{e\in E}F(e)\), formed from the topologies \(\tau_e\). Then
\[
 \Tprod\subseteq\TU\subseteq\Tbox.
\]
If \(E\) is finite, all three topologies coincide.
\end{theorem}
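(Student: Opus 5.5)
The plan is to compare neighbourhood bases at an arbitrary soft element \(\mathbf{x}\in\SE(F)\), using the neighbourhood base \(\{\SE(U)[\mathbf{x}]:U\in\sU\}\) for \(\TU\) from the preceding theorem. Each \(\SE(U)[\mathbf{x}]=\prod_{e\in E}U(e)[x_e]\) is a product of sets \(U(e)[x_e]\), and each of these is a \(\tau_e\)-neighbourhood of \(x_e\) because \(U(e)\in\sU_e\). For the box topology, take open sets \(O_e\) with \(x_e\in O_e\subseteq U(e)[x_e]\). Then \(\prod_e O_e\) is a box-open set containing \(\mathbf{x}\) and lying inside \(\SE(U)[\mathbf{x}]\). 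Hence every \(\TU\)-neighbourhood of every point is a \(\Tbox\)-neighbourhood, so every \(\TU\)-open set is \(\Tbox\)-open. This gives \(\TU\subseteq\Tbox\).

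For \(\Tprod\subseteq\TU\), it suffices to show that each subbasic open set \(\pi_e^{-1}(O)\), with \(O\in\tau_e\), is \(\TU\)-open. Fix \(\mathbf{x}\) with \(x_e\in O\). Since \(\tau_e\) is induced by \(\sU_e\), there is an entourage \(H\in\sU_e\) with \(H[x_e]\subseteq O\), and \(H=U_0(e)\) for some \(U_0\in\sU\). By the upward-closure construction in the proof that \(\sU_e\) is a uniformity, define \(W(e)=H\) and \(W(d)=F(d)^2\) for \(d\neq e\); then \(W\in\sU\), and indeed \(U_0\) itself already works. Now \(\SE(U_0)[\mathbf{x}]\subseteq\{\mathbf{y}:y_e\in U_0(e)[x_e]\}\subseteq\pi_e^{-1}(O)\). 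Thus \(\pi_e^{-1}(O)\) is a \(\TU\)-neighbourhood of each of its points. Finite intersections of such sets are then \(\TU\)-open, so \(\Tprod\subseteq\TU\).

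For finite \(E\), product and box topologies coincide, since finitely many coordinate restrictions are allowed in product basic sets. The two inclusions above then force \(\Tprod=\TU=\Tbox\). Alternatively, one can check directly that for finite \(E\) the box neighbourhood \(\prod_e H_e[x_e]\), with \(H_e=U_e(e)\), contains \(\SE(\bigcap_e U_e)[\mathbf{x}]\), using the filter intersection axiom.

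The main care point is the step \(\Tprod\subseteq\TU\): one needs that a single section entourage \(H\in\sU_e\) comes from a genuine soft entourage. This holds by the definition of \(\sU_e\) as \(\{U(e):U\in\sU\}\). One also needs that the \(\tau_e\)-interior of \(U(e)[x_e]\) contains \(x_e\), which is standard for uniform topologies: choose \(V\) with \(V\circ V\subseteq U(e)\), and then \(\operatorname{int}U(e)[x_e]\supseteq V[x_e]\). Everything else is routine.
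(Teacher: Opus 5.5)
Your proof is correct and is essentially the paper's argument: $\TU\subseteq\Tbox$ is proved exactly as in the paper, by shrinking each $U(e)[x_e]$ to a $\tau_e$-open $N_e\ni x_e$, and the finite case follows, as in the paper, from the coincidence of product and box topologies. The only cosmetic difference is that you check $\Tprod\subseteq\TU$ on subbasic cylinders $\pi_e^{-1}(O)$ and then intersect finitely many $\TU$-open sets, whereas the paper treats a basic product set directly by intersecting the finitely many entourages $U_e$, $e\in J$; your detour through $W$ is unnecessary, since $U_0$ already works, as you note.
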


\begin{proof}
Let \(\mathbf{x}\) belong to a basic product neighbourhood \(N=\prod_eN_e\), where \(N_e=F(e)\) except for finitely many \(e\) in a set \(J\). For each \(e\in J\), choose \(U_e\in\sU\) with \(U_e(e)[x_e]\subseteq N_e\). Then \(U=\bigcap_{e\in J}U_e\) satisfies \(\SE(U)[\mathbf{x}]\subseteq N\), so every product-open set is \(\TU\)-open.

Conversely, let \(O\in\TU\) and \(\mathbf{x}\in O\). Choose \(U\in\sU\) with \(\SE(U)[\mathbf{x}]\subseteq O\). For each \(e\), the set \(U(e)[x_e]\) is a neighbourhood in \(\tau_e\); choose \(N_e\in\tau_e\) with \(x_e\in N_e\subseteq U(e)[x_e]\). Then the box-open set \(\prod_eN_e\) contains \(\mathbf{x}\) and is contained in \(O\). For finite \(E\), product and box topologies are equal.
\end{proof}

\subsection{Separatedness and regularity}

The separation and regularity properties in this subsection are properties of the ordinary soft-element space \((\SE(F),\TU)\).

\begin{definition}[Separated soft uniformity]
A soft uniformity \(\sU\) is \emph{separated} if
\[
 \bigcap_{U\in\sU}U=\Diag_F,
\]
where the intersection is taken parameterwise.
\end{definition}

\begin{theorem}
For a soft uniform space \((F,\sU)\), the following conditions are equivalent:
\begin{enumerate}[(i)]
\item \(\sU\) is separated;
\item every section uniformity \(\sU_e\) is separated;
\item the soft-element uniformity \(\sU_{\SE}\) is separated;
\item \((\SE(F),\TU)\) is Hausdorff;
\item \((\SE(F),\TU)\) is \(T_1\).
\end{enumerate}
\end{theorem}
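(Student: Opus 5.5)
I will prove the cycle (i)\(\Leftrightarrow\)(ii)\(\Rightarrow\)(iii)\(\Rightarrow\)(iv)\(\Rightarrow\)(v)\(\Rightarrow\)(ii). The chain (iii)\(\Rightarrow\)(iv)\(\Rightarrow\)(v) is classical: by Theorem~\ref{thm:14}, \(\sU_{\SE}\) is an ordinary uniformity inducing \(\TU\). A separated uniformity induces a Hausdorff topology. Indeed, if \(\mathbf{x}\neq\mathbf{y}\), pick \(W\in\sU_{\SE}\) with \((\mathbf{x},\mathbf{y})\notin W\), and a symmetric \(V\) with \(V\circ V\subseteq W\); then \(V[\mathbf{x}]\) and \(V[\mathbf{y}]\) are disjoint neighbourhoods. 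Hausdorff trivially implies \(T_1\).

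For (i)\(\Leftrightarrow\)(ii), I note that the parameterwise intersection satisfies \(\bigl(\bigcap_{U\in\sU}U\bigr)(e)=\bigcap_{U\in\sU}U(e)=\bigcap\sU_e\), since \(\sU_e=\{U(e):U\in\sU\}\). So \(\bigcap_U U=\Diag_F\) holds exactly when \(\bigcap\sU_e=\Diag_{F(e)}\) for every \(e\), which is the definition of each \(\sU_e\) being separated.

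For (ii)\(\Rightarrow\)(iii), let \(\mathbf{x}\neq\mathbf{y}\). Then \(x_e\neq y_e\) for some \(e\), so by (ii) there is \(U\in\sU\) with \((x_e,y_e)\notin U(e)\). Hence \((\mathbf{x},\mathbf{y})\notin\SE(U)\), and since \(\{\SE(U)\}\) is a base of \(\sU_{\SE}\), the intersection of \(\sU_{\SE}\) is the diagonal.

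The step needing care is (v)\(\Rightarrow\)(ii), because \(T_1\) on \(\SE(F)\) must be transferred back to a single section. Here I use the standing assumption \(\SE(F)\neq\varnothing\). Fix \(e\) and \(a\neq b\) in \(F(e)\), and fix some \(\mathbf{z}\in\SE(F)\). Let \(\mathbf{x}\) and \(\mathbf{y}\) agree with \(\mathbf{z}\) off \(e\), with \(x_e=a\) and \(y_e=b\); then \(\mathbf{x}\neq\mathbf{y}\). By \(T_1\) there is a \(\TU\)-neighbourhood of \(\mathbf{x}\) missing \(\mathbf{y}\), and by the neighbourhood-base theorem it contains some \(\SE(U)[\mathbf{x}]\) with \(U\in\sU\). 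So \(\mathbf{y}\notin\SE(U)[\mathbf{x}]=\prod_d U(d)[x_d]\). For \(d\neq e\) we have \(y_d=x_d\in U(d)[x_d]\) by (U1). Hence \(b\notin U(e)[a]\), i.e.\ \((a,b)\notin U(e)\in\sU_e\). Thus \(\sU_e\) is separated, which closes the cycle.
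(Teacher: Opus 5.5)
Your proof is correct and uses the same key ingredients as the paper: the parameterwise intersection for (i)\(\Leftrightarrow\)(ii), a differing coordinate for the forward direction, and modifying a single coordinate of a fixed soft element (with (U1) handling the other coordinates) for the reverse direction. The only difference is organizational. The paper applies that trick to prove (iii)\(\Rightarrow\)(i) and then cites the classical equivalence of (iii), (iv), (v); you apply it directly to a \(T_1\)-neighbourhood \(\SE(U)[\mathbf{x}]\), so you never need the classical implication \(T_1\Rightarrow\) separated.
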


\begin{proof}
The equivalence of (i) and (ii) follows directly from the parameterwise definition. Assume (i) and let \(\mathbf{x}\neq\mathbf{y}\). For some \(e_0\), \(x_{e_0}\neq y_{e_0}\), and separatedness gives \(U\in\sU\) with \((x_{e_0},y_{e_0})\notin U(e_0)\). Hence \((\mathbf{x},\mathbf{y})\notin \SE(U)\), so \(\sU_{\SE}\) is separated.

Conversely, if (i) fails, there are \(e_0\in E\) and distinct \(a,b\in F(e_0)\) such that \((a,b)\in U(e_0)\) for every \(U\in\sU\). Fix \(\mathbf{p}\in\SE(F)\) and define \(\mathbf{x},\mathbf{y}\) by \(x_{e_0}=a\), \(y_{e_0}=b\), and \(x_e=y_e=p_e\) for \(e\neq e_0\). Since every entourage contains the diagonal, \((\mathbf{x},\mathbf{y})\in \SE(U)\) for every \(U\), so \(\sU_{\SE}\) is not separated. Thus (i) and (iii) are equivalent. The equivalence of (iii), (iv), and (v) is the usual separation theorem for uniform spaces.
\end{proof}

Each section topology is regular by the classical uniform-space theorem. The next result concerns the ordinary topology \(\TU\) on the full soft-element set \(\SE(F)\).

\begin{theorem}
Let \((F,\sU)\) be a soft uniform space. If \(\mathbf{x}\in O\in\TU\), then there exists \(G\in\TU\) such that
\[
 \mathbf{x}\in G\subseteq\operatorname{cl}_{\TU}G\subseteq O.
\]
Consequently \((\SE(F),\TU)\) is regular, and it is \(T_3\) when \(\sU\) is separated.
\end{theorem}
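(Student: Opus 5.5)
The plan is to reduce everything to the classical argument inside the uniform space \((\SE(F),\sU_{\SE})\), using the entourage base \(\{\SE(U):U\in\sU\}\) from Theorem~\ref{thm:14}.

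Given \(\mathbf{x}\in O\in\TU\), the neighbourhood-base theorem gives \(U\in\sU\) with \(\SE(U)[\mathbf{x}]\subseteq O\). By (U3), and then (U2) with the filter axiom, choose \(V\in\sU\) with \(V\circ V\sqsubseteq U\); replacing \(V\) by \(V\cap V^{-1}\) we may assume \(V\) is symmetric, and \(V\circ V\sqsubseteq U\) still holds.

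Set \(N=\SE(V)[\mathbf{x}]\), a \(\TU\)-neighbourhood of \(\mathbf{x}\), and let \(G\) be its \(\TU\)-interior. Then \(\mathbf{x}\in G\in\TU\) and \(\operatorname{cl}_{\TU}G\subseteq\operatorname{cl}_{\TU}N\).

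The key step is to show \(\operatorname{cl}_{\TU}N\subseteq\SE(U)[\mathbf{x}]\). Let \(\mathbf{y}\in\operatorname{cl}N\). Since \(\SE(V)[\mathbf{y}]\) is a neighbourhood of \(\mathbf{y}\), it meets \(N\) in some \(\mathbf{z}\). Then \((\mathbf{x},\mathbf{z})\in\SE(V)\) and \((\mathbf{y},\mathbf{z})\in\SE(V)\). Symmetry of \(V\), via \(\SE(V^{-1})=\SE(V)^{-1}\), gives \((\mathbf{z},\mathbf{y})\in\SE(V)\). Lemma (iv) then yields
\[
(\mathbf{x},\mathbf{y})\in\SE(V)\circ\SE(V)\subseteq\SE(V\circ V)\subseteq\SE(U).
\]
Hence \(\operatorname{cl}G\subseteq\SE(U)[\mathbf{x}]\subseteq O\).

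The only point needing care is the inclusion \(\SE(V)\circ\SE(V)\subseteq\SE(V\circ V)\). It goes in the correct direction for this argument, so I expect no real obstacle. The point is that we never need composition of soft-element relations to be computed coordinatewise.

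Regularity then follows: whenever \(\mathbf{x}\notin C\) with \(C\) closed, apply the result to \(O=\SE(F)\setminus C\) and take \(G\) and \(\SE(F)\setminus\operatorname{cl}G\) as the separating open sets. When \(\sU\) is separated, the preceding theorem gives \(T_1\), so the space is \(T_3\).
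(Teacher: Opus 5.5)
Your proposal is correct and follows essentially the same route as the paper. Both arguments take a symmetric square root \(V\) of \(U\), set \(G=\operatorname{int}_{\TU}(\SE(V)[\mathbf{x}])\), use \(\SE(V)\circ\SE(V)\subseteq\SE(V\circ V)\subseteq\SE(U)\) to trap the closure inside \(\SE(U)[\mathbf{x}]\), and obtain regularity from the complement of a closed set. The differences are cosmetic: you argue directly from a closure point \(\mathbf{y}\) where the paper argues the contrapositive (that \(\SE(V)[\mathbf{y}]\) is disjoint from \(\SE(V)[\mathbf{x}]\)), and you make the \(T_3\) conclusion explicit via the preceding separation theorem.
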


\begin{proof}
Choose \(U\in\sU\) with \(\SE(U)[\mathbf{x}]\subseteq O\). By the square-root axiom, choose \(W\in\sU\) such that \(W\circ W\sqsubseteq U\), and put
\[
 V=W\cap W^{-1}.
\]
Then \(V\in\sU\), \(V=V^{-1}\), and
\[
 V\circ V\sqsubseteq W\circ W\sqsubseteq U.
\]
Since \(\Diag_F\sqsubseteq V\), we also have
\[
 \SE(V)[\mathbf{x}]
 \subseteq \SE(V\circ V)[\mathbf{x}]
 \subseteq \SE(U)[\mathbf{x}]
 \subseteq O.
\]
Put
\[
 G=\operatorname{int}_{\TU}\bigl(\SE(V)[\mathbf{x}]\bigr).
\]
Because \(\SE(V)[\mathbf{x}]\) is a neighbourhood of \(\mathbf{x}\), one has \(\mathbf{x}\in G\).

We next show that
\[
 \operatorname{cl}_{\TU}\bigl(\SE(V)[\mathbf{x}]\bigr)
 \subseteq \SE(U)[\mathbf{x}].
\]
Let \(\mathbf{y}\notin \SE(U)[\mathbf{x}]\). If some \(\mathbf{z}\) belonged to both \(\SE(V)[\mathbf{x}]\) and \(\SE(V)[\mathbf{y}]\), then symmetry of \(V\) would give
\[
 (\mathbf{x},\mathbf{y})\in \SE(V)\circ \SE(V)
 \subseteq\SE(V\circ V)\subseteq \SE(U),
\]
a contradiction. Hence \(\SE(V)[\mathbf{y}]\) is a neighbourhood of \(\mathbf{y}\) disjoint from \(\SE(V)[\mathbf{x}]\). Its \(\TU\)-interior is an open neighbourhood of \(\mathbf{y}\) with the same property. Thus
\(
 \mathbf{y}\notin\operatorname{cl}_{\TU}(\SE(V)[\mathbf{x}]).
\)
It follows that
\[
 \operatorname{cl}_{\TU}G
 \subseteq \operatorname{cl}_{\TU}\bigl(\SE(V)[\mathbf{x}]\bigr)
 \subseteq \SE(U)[\mathbf{x}]
 \subseteq O.
\]

If \(C\) is closed and \(\mathbf{x}\notin C\), apply the construction to \(O=\SE(F)\setminus C\). Then \(G\) and
\[
 O_2=\SE(F)\setminus\operatorname{cl}_{\TU}G
\]
are disjoint open sets containing \(\mathbf{x}\) and \(C\), respectively.
\end{proof}

\section{Soft Uniform Continuity}

\subsection{Definition and basic properties}
We now compare the relation-based definition with the ordinary uniformity on soft elements.

\begin{definition}[Soft uniformly continuous mapping]
Let \((F,\sU)\) and \((G,\sV)\) be soft uniform spaces. A soft mapping \(f=\{f_e\}_{e\in E}\colon F\to G\) is \emph{soft uniformly continuous} if for every \(V\in\sV\) there exists \(U\in\sU\) such that
\[
 (f\sprod f)(U)\sqsubseteq V,
\]
where
\[
 (f\sprod f)(U)(e)=\{(f_e(a),f_e(b)):(a,b)\in U(e)\}.
\]
The mapping is called \emph{soft-element continuous} if \(\SE(f)\colon(\SE(F),\TU)\to(\SE(G),\TV)\) is continuous.
\end{definition}

\begin{theorem}
\label{thm:23}
A soft mapping \(f\colon(F,\sU)\to(G,\sV)\) is soft uniformly continuous if and only if
\[
 \SE(f)\colon(\SE(F),\sU_{\SE})\longrightarrow(\SE(G),\sV_{\SE})
\]
is uniformly continuous in the classical sense.
\end{theorem}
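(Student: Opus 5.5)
The plan is to unwind both definitions and compare them coordinatewise, using the entourage base \(\{\SE(U):U\in\sU\}\) of Theorem~\ref{thm:14} on each side. Throughout, the key identity is that for \(\mathbf{x},\mathbf{y}\in\SE(F)\) one has \((\SE(f)(\mathbf{x}),\SE(f)(\mathbf{y}))_e=(f_e(x_e),f_e(y_e))\). Consequently \((\SE(f)\times\SE(f))(\SE(U))\subseteq\SE\bigl((f\sprod f)(U)\bigr)\).

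\emph{Forward direction.} Assume \(f\) is soft uniformly continuous and let \(W\in\sV_{\SE}\). By Theorem~\ref{thm:14} there is \(V\in\sV\) with \(\SE(V)\subseteq W\), and by hypothesis there is \(U\in\sU\) with \((f\sprod f)(U)\sqsubseteq V\). If \((\mathbf{x},\mathbf{y})\in\SE(U)\), then \((x_e,y_e)\in U(e)\) for every \(e\). Hence \((f_e(x_e),f_e(y_e))\in V(e)\) for every \(e\), that is, \((\SE(f)(\mathbf{x}),\SE(f)(\mathbf{y}))\in\SE(V)\subseteq W\). Since \(\SE(U)\in\sU_{\SE}\), this is classical uniform continuity.

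\emph{Converse.} Assume \(\SE(f)\) is uniformly continuous and fix \(V\in\sV\). Then \(\SE(V)\in\sV_{\SE}\), so there is an entourage of \(\sU_{\SE}\), and hence some \(\SE(U)\) with \(U\in\sU\), whose image under \(\SE(f)\times\SE(f)\) lies in \(\SE(V)\). We must show \((f\sprod f)(U)\sqsubseteq V\), which is a sectionwise statement. Fix \(e_0\in E\) and \((a,b)\in U(e_0)\). Using the standing assumption \(\SE(F)\neq\varnothing\), fix \(\mathbf{p}\in\SE(F)\). Define \(\mathbf{x},\mathbf{y}\) by \(x_{e_0}=a\), \(y_{e_0}=b\), and \(x_e=y_e=p_e\) for \(e\neq e_0\), exactly as in the proof of the separation theorem. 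By (U1), \((p_e,p_e)\in U(e)\) for \(e\neq e_0\), so \((\mathbf{x},\mathbf{y})\in\SE(U)\). Therefore \((\SE(f)(\mathbf{x}),\SE(f)(\mathbf{y}))\in\SE(V)\), and reading off the \(e_0\)-coordinate gives \((f_{e_0}(a),f_{e_0}(b))\in V(e_0)\).

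The only genuine obstacle is this converse step. Uniform continuity of \(\SE(f)\) speaks only about pairs of whole soft elements, while soft uniform continuity is a condition on each section separately. The extension trick is what bridges the two, and it relies on two facts: nonemptiness of \(\SE(F)\), and the diagonal axiom (U1), which places the padded coordinates inside \(U\).
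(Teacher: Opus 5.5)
Your proposal is correct and follows the paper's proof essentially step for step. In the forward direction, \(\SE(U)\) is pushed into \(\SE(V)\) coordinatewise. In the converse, you use the same padding of a fixed \(\mathbf{p}\in\SE(F)\), together with the diagonal axiom, to realize each pair \((a,b)\in U(e_0)\) as a pair of soft elements lying in \(\SE(U)\). Your forward direction is slightly more explicit than the paper's, since you pass from an arbitrary \(W\in\sV_{\SE}\) to a basic \(\SE(V)\subseteq W\), which the paper leaves implicit.
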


\begin{proof}
Assume \(f\) is soft uniformly continuous. Given \(V\in\sV\), choose \(U\in\sU\) with \((f\sprod f)(U)\sqsubseteq V\). Then
\[
 (\SE(f)\times \SE(f))(\SE(U))\subseteq \SE(V),
\]
so \(\SE(f)\) is uniformly continuous.

Conversely, suppose \(\SE(f)\) is uniformly continuous and fix \(V\in\sV\). Choose \(W\in\sU_{\SE}\) with \((\SE(f)\times \SE(f))(W)\subseteq \SE(V)\), and choose \(U\in\sU\) with \(\SE(U)\subseteq W\). Fix \(e\in E\) and \((a,b)\in U(e)\). Take \(\mathbf{p}\in\SE(F)\) and define \(\mathbf{x},\mathbf{y}\) by \(x_e=a\), \(y_e=b\), and \(x_d=y_d=p_d\) for \(d\neq e\). Since \(U\) contains the diagonal, \((\mathbf{x},\mathbf{y})\in \SE(U)\subseteq W\). Hence \((f_e(a),f_e(b))\in V(e)\). This holds for every \(e\), proving \((f\sprod f)(U)\sqsubseteq V\).
\end{proof}

\begin{proposition}
Every soft uniformly continuous mapping is soft-element continuous and sectionwise soft continuous.
\end{proposition}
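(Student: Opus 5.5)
The plan is to derive both assertions from Theorem~\ref{thm:23} together with the classical fact that uniformly continuous maps between uniform spaces are continuous for the induced topologies. Sectionwise continuity will also get a short direct argument at the level of sections.

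\emph{Soft-element continuity.} Let \(f\) be soft uniformly continuous. By Theorem~\ref{thm:23}, \(\SE(f)\colon(\SE(F),\sU_{\SE})\to(\SE(G),\sV_{\SE})\) is uniformly continuous. To check continuity at \(\mathbf{x}\in\SE(F)\), I use the neighbourhood base \(\{\SE(V)[\SE(f)(\mathbf{x})]:V\in\sV\}\) in \(\TV\), which the paper's neighbourhood-base theorem provides. Given \(V\in\sV\), choose \(U\in\sU\) with \((f\sprod f)(U)\sqsubseteq V\). Then
\[
\SE(f)\bigl(\SE(U)[\mathbf{x}]\bigr)\subseteq \SE(V)[\SE(f)(\mathbf{x})].
\]
Indeed, if \((x_e,y_e)\in U(e)\) for all \(e\), then \((f_e(x_e),f_e(y_e))\in V(e)\) for all \(e\). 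Since \(\SE(U)[\mathbf{x}]\) is a \(\TU\)-neighbourhood of \(\mathbf{x}\), this gives continuity at every point.

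\emph{Sectionwise continuity.} Here I take \(\tau_F(e)=\tau_e\), the topology of \(\sU_e\), and likewise \(\tau_G(e)\) from \(\sV_e\). Fix \(e\in E\), \(a\in F(e)\), and a \(\tau_G(e)\)-neighbourhood \(N\) of \(f_e(a)\). The neighbourhoods in a uniform topology are exactly the sets containing an entourage section, so there is \(V\in\sV\) with \(V(e)[f_e(a)]\subseteq N\); this uses only the definition \(\sV_e=\{V(e):V\in\sV\}\). Choose \(U\in\sU\) with \((f\sprod f)(U)\sqsubseteq V\). Then \(f_e(U(e)[a])\subseteq V(e)[f_e(a)]\subseteq N\). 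Since \(U(e)\in\sU_e\), the set \(U(e)[a]\) is a \(\tau_e\)-neighbourhood of \(a\). Hence \(f_e\) is continuous at \(a\), and therefore \(f_e\) is continuous for every \(e\). By the equivalence stated after the definition of sectionwise soft continuity, this is exactly sectionwise soft continuity.

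The only real obstacle is bookkeeping: we must make sure the sectionwise topologies intended are those induced by \(\sU_e\) and \(\sV_e\), that is, \(\tau_{\sU}\) and \(\tau_{\sV}\). Otherwise the argument is a direct unwinding of the definitions.
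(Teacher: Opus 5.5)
Your proposal is correct and follows essentially the same route as the paper. Soft-element continuity comes from the uniform continuity of \(\SE(f)\): you write out the classical ``uniformly continuous implies continuous'' step by hand from the definition, so your appeal to Theorem~\ref{thm:23} is not actually needed. Sectionwise continuity comes from pulling back a section entourage \(V(e)\) to \(U(e)\), which is exactly how the paper shows that each \(f_e\) is uniformly continuous.
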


\begin{proof}
The first assertion follows from Theorem~\ref{thm:23} and the classical implication from uniform continuity to continuity. For a fixed \(e\), every target entourage in \(\sV_e\) is the \(e\)-section of some \(V\in\sV\); the corresponding source entourage supplied by soft uniform continuity shows that \(f_e\) is uniformly continuous. Thus each \(f_e\) is continuous, which is sectionwise soft continuity.
\end{proof}

\begin{proposition}
The composition of two soft uniformly continuous mappings is soft uniformly continuous.
\end{proposition}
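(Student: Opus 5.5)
The plan is to argue directly from the definition of soft uniform continuity, using monotonicity of the soft image under \(f\sprod f\). A shorter route via Theorem~\ref{thm:23} is available and is described at the end as a cross-check. Let \(f\colon(F,\sU)\to(G,\sV)\) and \(g\colon(G,\sV)\to(H,\mathcal W)\) be soft uniformly continuous, where \(H\) is a soft set over the same parameter set \(E\). The composite \(g\circ f\) is the soft mapping with components \((g\circ f)_e=g_e\circ f_e\).

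First I will record two elementary facts about soft images, both checked parameterwise. The first is monotonicity: if \(U\sqsubseteq U'\), then \((f\sprod f)(U)\sqsubseteq(f\sprod f)(U')\). The second is functoriality: \(((g\circ f)\sprod(g\circ f))(U)=(g\sprod g)\bigl((f\sprod f)(U)\bigr)\), because at parameter \(e\) both sides equal \(\{(g_e(f_e(a)),g_e(f_e(b))):(a,b)\in U(e)\}\).

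The main argument then runs as follows. Fix \(W\in\mathcal W\). By soft uniform continuity of \(g\), choose \(V\in\sV\) with \((g\sprod g)(V)\sqsubseteq W\). By soft uniform continuity of \(f\), choose \(U\in\sU\) with \((f\sprod f)(U)\sqsubseteq V\). Functoriality and monotonicity then give
\[
((g\circ f)\sprod(g\circ f))(U)=(g\sprod g)\bigl((f\sprod f)(U)\bigr)\sqsubseteq(g\sprod g)(V)\sqsubseteq W,
\]
which is exactly the required condition for \(g\circ f\).

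As a cross-check, one can use \(\SE(g\circ f)=\SE(g)\circ\SE(f)\), which holds coordinatewise. By Theorem~\ref{thm:23}, both \(\SE(f)\) and \(\SE(g)\) are uniformly continuous, so their composite is uniformly continuous by the classical result. Applying Theorem~\ref{thm:23} again in the converse direction shows that \(g\circ f\) is soft uniformly continuous.

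There is no serious obstacle. The only point needing care is to state clearly that composition of soft mappings is taken componentwise over the fixed parameter set \(E\), so that the image identity above holds section by section.
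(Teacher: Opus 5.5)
Your proof is correct, but your main argument follows a different route from the paper's. You work entirely at the level of soft relations: you chain the two entourage choices and use monotonicity and functoriality of the soft image $(f\sprod f)(\cdot)$, which is the classical proof carried out parameterwise. The paper's proof is exactly what you give as a cross-check: $\SE(g\circ f)=\SE(g)\circ\SE(f)$, Theorem~\ref{thm:23}, and the classical fact that composites of uniformly continuous maps are uniformly continuous. Once Theorem~\ref{thm:23} is available, the paper's version is a one-line transfer, and it fits the paper's general strategy of reducing soft statements to the ordinary uniform space $(\SE(F),\sU_{\SE})$. Your direct version is self-contained and slightly more general. It uses none of the uniformity axioms, only the defining condition. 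It also does not depend on the converse half of Theorem~\ref{thm:23}. That converse extends a single coordinate to a full soft element, so it relies on the standing assumption $\SE(F)\neq\varnothing$. Without that assumption, $\SE(f)$ would be vacuously uniformly continuous, and the reduction would not recover soft uniform continuity.
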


\begin{proof}
The induced soft-element map of a composition is the composition of the induced maps. The result follows from Theorem~\ref{thm:23} and the corresponding classical fact.
\end{proof}

When \(E\) is finite, Theorem~\ref{thm:18} shows that soft-element continuity is equivalent to sectionwise soft continuity. For infinitely many parameters, soft-element continuity may be stronger because \(\TU\) can be strictly finer than the product topology.

\subsection{A soft Heine--Cantor theorem}
We now introduce the compactness notion used in the uniform results below.

\begin{definition}[Soft compactness]
A soft uniform space \((F,\sU)\) is \emph{soft compact} if the canonical space \((\SE(F),\TU)\) is compact.
\end{definition}

This compactness notion is the one naturally associated with the uniformity \(\sU_{\SE}\) on the soft-element space. 
It is related to the soft-element approach to compactness studied in \cite{GoldarRay2019}, where a soft subset \(A\) is considered through the ordinary set \(\SE(A)\) of its soft elements. 
In the present paper the topology is the canonical topology \(\TU\) induced by the soft uniformity.

This differs from merely requiring compactness of all parameter sections. 
Indeed, the induced maps \(\SE(f)\), the entourages \(\SE(U)\), and the Cauchy filters used later are all defined on \(\SE(F)\). 
The next proposition records the relation with parameterwise compactness.

\begin{proposition}
If \((F,\sU)\) is soft compact, then every section \((F(e),\tau_e)\) is compact. If \(E\) is finite, the converse also holds. For an infinite parameter set, the converse can fail.
\end{proposition}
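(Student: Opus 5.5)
The proposition has three parts: compactness passes to sections, the converse holds for finite \(E\), and the converse can fail for infinite \(E\). I would prove them in that order.

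\emph{First part.} For a fixed \(e\in E\), consider the coordinate projection \(\pi_e\colon\SE(F)\to F(e)\), \(\mathbf{x}\mapsto x_e\). It is continuous from \(\Tprod\) to \(\tau_e\). By Theorem~\ref{thm:18}, \(\Tprod\subseteq\TU\), so \(\pi_e\) is also continuous from \(\TU\). Since \(\SE(F)\neq\varnothing\), we may fix \(\mathbf{p}\in\SE(F)\). Then every \(a\in F(e)\) is \(\pi_e(\mathbf{x})\) for the soft element \(\mathbf{x}\) that agrees with \(\mathbf{p}\) off \(e\) and has \(x_e=a\). Hence \(\pi_e\) is surjective, and \(F(e)=\pi_e(\SE(F))\) is compact as a continuous image of a compact space.

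\emph{Finite \(E\).} By Theorem~\ref{thm:18}, \(\TU=\Tprod\) in this case. Tychonoff's theorem for finitely many factors then gives compactness of \((\SE(F),\TU)\) from compactness of every \((F(e),\tau_e)\).

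\emph{Counterexample for infinite \(E\).} This is the step needing an actual construction. The aim is a soft uniformity with compact sections whose soft-element topology \(\TU\) is strictly finer than the product topology and not compact. I would take \(E=\mathbb N\) and \(F(n)=[0,1]\) for every \(n\), with the usual metric \(d\). The base would consist of
\[
U_\varepsilon(n)=\{(a,b):d(a,b)<\varepsilon\},\qquad \varepsilon>0,
\]
which uses the same \(\varepsilon\) in every section. Axioms (B1)--(B4) are immediate, and each section uniformity is the usual one on \([0,1]\), which is compact.

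However, \(\sU_{\SE}\) is then the uniformity of the sup metric on \([0,1]^{\mathbb N}\), so \(\TU\) is the uniform topology. This space is not compact. Take the soft elements \(\mathbf{x}^{(k)}\) with \(x^{(k)}_n=1\) if \(n=k\) and \(0\) otherwise. Any two distinct ones are at sup-distance \(1\). The cover by \(\SE(U_{1/2})\)-balls therefore has no finite subcover, because each ball contains at most one \(\mathbf{x}^{(k)}\) and there are infinitely many of them. This infinite uniformly discrete set is what defeats compactness.

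The main obstacle is verifying cleanly that \(\TU\) is the sup-metric topology, or at least that the balls \(\SE(U_\varepsilon)[\mathbf{x}]\) form a neighbourhood base. This follows from the neighbourhood-base theorem for \(\TU\), because the \(U_\varepsilon\) form a base of \(\sU\).
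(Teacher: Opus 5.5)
Your first two parts match the paper's proof: \(\pi_e\) is continuous because \(\Tprod\subseteq\TU\), it is surjective by extending from a fixed \(\mathbf{p}\), and for finite \(E\) you use \(\TU=\Tprod\) with finite Tychonoff. Your route differs only in the counterexample.

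The paper takes \(F(n)=\{0,1\}\) with the discrete soft uniformity. There \(\SE(\Diag_F)=\Diag_{\SE(F)}\) is an entourage, so \(\TU\) is discrete on the infinite set \(\{0,1\}^{\mathbb N}\), and non-compactness is immediate.

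Your example, \(F(n)=[0,1]\) with the common-\(\varepsilon\) metric base, is also valid. Since
\(\{\mathbf{x}:\sup_n|x_n-y_n|<\varepsilon\}\subseteq\SE(U_\varepsilon)[\mathbf{y}]\subseteq\{\mathbf{x}:\sup_n|x_n-y_n|\le\varepsilon\}\),
\(\sU_{\SE}\) is indeed the sup-metric uniformity, and the \(\mathbf{x}^{(k)}\) form an infinite uniformly discrete set. It takes slightly more work than the paper's example, but it shows more. The failure is not an artifact of a degenerate uniformity: it occurs for the most natural metric soft uniformity with connected compact sections. It also exhibits \(\TU\) strictly between \(\Tprod\), which is compact by Tychonoff, and \(\Tbox\).

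One detail needs tightening. The sets \(\SE(U_{1/2})[\mathbf{y}]\) are neighbourhoods but need not be \(\TU\)-open. For \(\mathbf{y}=\mathbf{0}\), the soft element with coordinates \(1/2-1/(n+2)\) lies in this set but is not an interior point. So your family of balls is not literally an open cover. Use their interiors, or the open sup-metric balls of radius \(1/2\), as the open cover. Each of these still contains at most one \(\mathbf{x}^{(k)}\), and the argument goes through unchanged.
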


\begin{proof}
The coordinate projection \(\pi_e\colon(\SE(F),\TU)\to(F(e),\tau_e)\) is continuous because \(\Tprod\subseteq\TU\) by Theorem~\ref{thm:18}. It is surjective: a prescribed \(a\in F(e)\) can be extended to a soft element by fixing the remaining coordinates. Hence compactness of \(\SE(F)\) implies compactness of every section. If \(E\) is finite, \(\TU=\Tprod\), so compactness of all sections implies compactness of \(\SE(F)\). The failure for infinite \(E\) is shown in Example~\ref{ex: 36}.
\end{proof}

\begin{lemma}
\label{lem:28}
Let \((F,\sU)\) be soft compact and let \(\mathcal C\) be an open cover of \((\SE(F),\TU)\). There exists \(U\in\sU\) such that, for every \(\mathbf{x}\in\SE(F)\), the neighbourhood \(\SE(U)[\mathbf{x}]\) is contained in some member of \(\mathcal C\).
\end{lemma}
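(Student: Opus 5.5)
This is the Lebesgue covering lemma for the uniform space \((\SE(F),\sU_{\SE})\). We use the fact, from the theorem on neighbourhood bases, that the sets \(\SE(U)[\mathbf{x}]\) with \(U\in\sU\) form a neighbourhood base at \(\mathbf{x}\) in \(\TU\). We also use that every entourage of \(\sU_{\SE}\) contains some \(\SE(U)\), as stated in Theorem~\ref{thm:14}. The argument is the classical one, with every entourage taken to be of the form \(\SE(U)\).

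\emph{Step 1: a local entourage at each point.} For each \(\mathbf{x}\in\SE(F)\), pick \(C_{\mathbf{x}}\in\mathcal C\) with \(\mathbf{x}\in C_{\mathbf{x}}\). Since \(C_{\mathbf{x}}\) is open, there is \(U_{\mathbf{x}}\in\sU\) with \(\SE(U_{\mathbf{x}})[\mathbf{x}]\subseteq C_{\mathbf{x}}\). By (U3), choose \(W_{\mathbf{x}}\in\sU\) with \(W_{\mathbf{x}}\circ W_{\mathbf{x}}\sqsubseteq U_{\mathbf{x}}\). The sets \(\operatorname{int}_{\TU}\bigl(\SE(W_{\mathbf{x}})[\mathbf{x}]\bigr)\) contain \(\mathbf{x}\), so they form an open cover of \(\SE(F)\).

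\emph{Step 2: compactness and a finite intersection.} By soft compactness, finitely many of these interiors, at points \(\mathbf{x}_1,\dots,\mathbf{x}_n\), cover \(\SE(F)\). Put \(U=W_{\mathbf{x}_1}\cap\cdots\cap W_{\mathbf{x}_n}\). This lies in \(\sU\) because \(\sU\) is a filter.

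\emph{Step 3: verification.} Given \(\mathbf{x}\in\SE(F)\), choose \(i\) with \((\mathbf{x}_i,\mathbf{x})\in\SE(W_{\mathbf{x}_i})\). Let \(\mathbf{y}\in\SE(U)[\mathbf{x}]\), so \((\mathbf{x},\mathbf{y})\in\SE(U)\subseteq\SE(W_{\mathbf{x}_i})\). The lemma on \(\SE\) of compositions, part (iv), gives
\[
(\mathbf{x}_i,\mathbf{y})\in\SE(W_{\mathbf{x}_i})\circ\SE(W_{\mathbf{x}_i})\subseteq\SE(W_{\mathbf{x}_i}\circ W_{\mathbf{x}_i})\subseteq\SE(U_{\mathbf{x}_i}).
\]
Hence \(\mathbf{y}\in C_{\mathbf{x}_i}\), and so \(\SE(U)[\mathbf{x}]\subseteq C_{\mathbf{x}_i}\).

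The main point needing care is the order of composition in Step 3. With the paper's convention, \((a,c)\in(R\circ S)(e)\) means \((a,b)\in S(e)\) and \((b,c)\in R(e)\). Here the path runs from \(\mathbf{x}_i\) to \(\mathbf{x}\) to \(\mathbf{y}\), with both links in \(\SE(W_{\mathbf{x}_i})\), so it matches \(W\circ W\) and no symmetrisation is needed. The other point is to use only relations of the form \(\SE(\cdot)\), so that the result is a soft entourage \(U\) and not merely an entourage of \(\sU_{\SE}\). This is guaranteed because the intersection is taken in \(\Rel(F)\) and \(\SE\) commutes with finite intersections.
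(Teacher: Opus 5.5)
Your proof is correct and follows the paper's argument step for step: a local square-root entourage at each point, a finite subcover by the interiors, the intersection \(U\), and the composition estimate. The only difference is that the paper takes each \(V_{\mathbf{x}}\) symmetric, whereas you correctly observe that the chain \(\mathbf{x}_i\to\mathbf{x}\to\mathbf{y}\) already matches \(W_{\mathbf{x}_i}\circ W_{\mathbf{x}_i}\), so symmetry is not needed.
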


\begin{proof}
For each \(\mathbf{x}\), choose \(C_{\mathbf{x}}\in\mathcal C\) containing \(\mathbf{x}\). Choose \(H_{\mathbf{x}}\in\sU\) with \(\SE(H_{\mathbf{x}})[\mathbf{x}]\subseteq C_{\mathbf{x}}\), and then choose a symmetric \(V_{\mathbf{x}}\in\sU\) such that \(V_{\mathbf{x}}\circ V_{\mathbf{x}}\sqsubseteq H_{\mathbf{x}}\). The open sets
\[
 G_{\mathbf{x}}=\operatorname{int}_{\TU}\bigl(\SE(V_{\mathbf{x}})[\mathbf{x}]\bigr)
\]
cover \(\SE(F)\). Choose a finite subcover \(G_{\mathbf{x}_1},\ldots,G_{\mathbf{x}_n}\) and put \(U=\bigcap_{k=1}^{n}V_{\mathbf{x}_k}\).

Given \(\mathbf{x}\), choose \(k\) with \(\mathbf{x}\in G_{\mathbf{x}_k}\). If \(\mathbf{y}\in \SE(U)[\mathbf{x}]\), then \((\mathbf{x}_k,\mathbf{x})\in \SE(V_{\mathbf{x}_k})\) and \((\mathbf{x},\mathbf{y})\in \SE(V_{\mathbf{x}_k})\). Therefore
\[
 (\mathbf{x}_k,\mathbf{y})\in \SE(V_{\mathbf{x}_k})\circ \SE(V_{\mathbf{x}_k})
 \subseteq \SE(H_{\mathbf{x}_k}),
\]
so \(\mathbf{y}\in C_{\mathbf{x}_k}\). Thus \(\SE(U)[\mathbf{x}]\subseteq C_{\mathbf{x}_k}\).
\end{proof}

\begin{theorem}
Let \((F,\sU)\) be soft compact, let \((G,\sV)\) be a soft uniform space, and let \(f\colon F\to G\) be soft-element continuous. Then \(f\) is soft uniformly continuous.
\end{theorem}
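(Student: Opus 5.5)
By Theorem~\ref{thm:23}, it suffices to show that the ordinary map \(\SE(f)\colon(\SE(F),\sU_{\SE})\to(\SE(G),\sV_{\SE})\) is uniformly continuous. We are given that \(\SE(f)\) is continuous from \(\TU\) to \(\TV\) and that \((\SE(F),\TU)\) is compact. We then run the classical Heine--Cantor argument, with Lemma~\ref{lem:28} serving as the Lebesgue number lemma.

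Fix \(V\in\sV\). By the square-root axiom and symmetrization (as in the regularity proof), choose a symmetric \(W\in\sV\) with \(W\circ W\sqsubseteq V\). For each \(\mathbf{z}\in\SE(G)\), let \(O_{\mathbf{z}}=\operatorname{int}_{\TV}(\SE(W)[\mathbf{z}])\); by the neighbourhood-base theorem it contains \(\mathbf{z}\). By soft-element continuity, the family
\[
\mathcal C=\{\SE(f)^{-1}(O_{\mathbf{z}}):\mathbf{z}\in\SE(G)\}
\]
is an open cover of \((\SE(F),\TU)\). Lemma~\ref{lem:28} then gives \(U\in\sU\) such that each \(\SE(U)[\mathbf{x}]\) lies in some member of \(\mathcal C\).

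Now let \((\mathbf{x},\mathbf{y})\in\SE(U)\). Both \(\mathbf{x}\) and \(\mathbf{y}\) lie in \(\SE(U)[\mathbf{x}]\), using the diagonal axiom for \(\mathbf{x}\). Hence for some \(\mathbf{z}\), both \(\SE(f)(\mathbf{x})\) and \(\SE(f)(\mathbf{y})\) lie in \(\SE(W)[\mathbf{z}]\). By the symmetry of \(W\) and Lemma (iv) on compositions,
\[
(\SE(f)(\mathbf{x}),\SE(f)(\mathbf{y}))\in\SE(W)\circ\SE(W)\subseteq\SE(W\circ W)\subseteq\SE(V).
\]
Therefore \((\SE(f)\times\SE(f))(\SE(U))\subseteq\SE(V)\). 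Since the sets \(\SE(V)\) form a base for \(\sV_{\SE}\) (Theorem~\ref{thm:14}), \(\SE(f)\) is uniformly continuous. Theorem~\ref{thm:23} then converts this back to the soft statement. Alternatively, one can rerun its section-by-section argument directly to obtain \((f\sprod f)(U)\sqsubseteq V\).

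The only delicate point is bookkeeping rather than substance. We must make sure we work with genuinely open sets, namely the interiors \(O_{\mathbf{z}}\), so that Lemma~\ref{lem:28} applies. We must also use symmetry of \(W\) correctly when chaining \(\SE(f)(\mathbf{x})\to\mathbf{z}\to\SE(f)(\mathbf{y})\). Finally, we must compose through \(\SE(W)\circ\SE(W)\subseteq\SE(W\circ W)\) rather than expecting equality, since in general equality fails for soft-element relations.
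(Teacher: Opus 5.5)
Your proof is correct and follows the paper's argument essentially step for step. The paper likewise takes a symmetric square root \(W\), covers \(\SE(F)\) by preimages of interiors of \(\SE(W)\)-neighbourhoods, uses Lemma~\ref{lem:28} as the Lebesgue-number step, chains through the centre using symmetry, and returns to the soft statement via Theorem~\ref{thm:23}. The only cosmetic difference is that the paper indexes the cover by \(\mathbf{x}\in\SE(F)\), centring each neighbourhood at \(\SE(f)(\mathbf{x})\), whereas you index it by all \(\mathbf{z}\in\SE(G)\).

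One correction to your closing remark: the equality \(\SE(R)\circ\SE(S)=\SE(R\circ S)\) does in fact hold. The middle witnesses \(b_e\) can be chosen coordinatewise and assembled into a soft element. Since your argument uses only the inclusion, this does not affect the proof.
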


\begin{proof}
Let \(V\in\sV\). Choose a symmetric \(W\in\sV\) with \(W\circ W\sqsubseteq V\). For each \(\mathbf{x}\in\SE(F)\), the set
\[
 C_{\mathbf{x}}=(\SE(f))^{-1}\!\left(\operatorname{int}_{\TV}\SE(W)[\SE(f)(\mathbf{x})]\right)
\]
is open and contains \(\mathbf{x}\). These sets cover \(\SE(F)\). By Lemma~\ref{lem:28}, choose \(U\in\sU\) such that every \(\SE(U)[\mathbf{x}]\) lies in one of them.

If \((\mathbf{x},\mathbf{y})\in \SE(U)\), then \(\mathbf{x}\) and \(\mathbf{y}\) lie in a common \(C_{\mathbf{z}}\). Hence both images lie in \(\SE(W)[\SE(f)(\mathbf{z})]\), and symmetry gives
\[
 (\SE(f)(\mathbf{x}),\SE(f)(\mathbf{y}))\in \SE(W)\circ \SE(W)\subseteq \SE(V).
\]
Thus \(\SE(f)\) is uniformly continuous. By Theorem~\ref{thm:23} the proof follows.
\end{proof}

\section{Soft Total Boundedness and Soft Completeness}
The global definitions are taken in the canonical uniform space \((\SE(F),\sU_{\SE})\).

\begin{definition}[Soft total boundedness]
A soft uniform space \((F,\sU)\) is \emph{soft totally bounded} if, for every \(U\in\sU\), there exist \(\mathbf{x}_1,\ldots,\mathbf{x}_n\in\SE(F)\) such that
\[
 \SE(F)=\bigcup_{k=1}^{n}\SE(U)[\mathbf{x}_k].
\]
Equivalently, the soft-element uniform space \((\SE(F),\sU_{\SE})\) is totally bounded.
\end{definition}

\begin{definition}[Soft Cauchy filter and completeness]
A \emph{soft Cauchy filter} is an ordinary proper filter \(\mathfrak F\) on \(\SE(F)\) such that, for every \(U\in\sU\), there exists \(M\in\mathfrak F\) satisfying
\[
 M\times M\subseteq \SE(U).
\]
The soft uniform space is \emph{soft complete} if every soft Cauchy filter converges in \(\TU\).
\end{definition}

Thus a soft Cauchy filter is a Cauchy filter in the ordinary uniform space \((\SE(F),\sU_{\SE})\). 
The term ``soft'' reflects that the entourages used in the Cauchy condition come from the soft uniformity. 
This is different from intrinsic soft filters, which are families of soft subsets; such filters are studied in \cite{GoldarRaySarkar2025Filters}. 
Since an ordinary filter on \(\SE(F)\) may contain nonrectangular subsets, this formulation is suited to the soft-element uniformity used in this paper.

\begin{theorem}
If a soft uniform space \((F,\sU)\) is soft compact, then it is soft totally bounded and soft complete.
\end{theorem}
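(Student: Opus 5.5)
The plan is to reduce both claims to the classical facts for the ordinary uniform space \((\SE(F),\sU_{\SE})\). By Theorem~\ref{thm:14} this is a genuine uniform space with entourage base \(\{\SE(U):U\in\sU\}\), and by definition \(\TU\) is its induced topology. Soft compactness then says exactly that \((\SE(F),\sU_{\SE})\) is a compact uniform space. Still, I would give short direct arguments so that the proof stays self-contained.

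\emph{Total boundedness.} Fix \(U\in\sU\). For each \(\mathbf{x}\in\SE(F)\), the set \(\operatorname{int}_{\TU}\bigl(\SE(U)[\mathbf{x}]\bigr)\) contains \(\mathbf{x}\), because \(\SE(U)[\mathbf{x}]\) is a neighbourhood of \(\mathbf{x}\) by the neighbourhood-base theorem. These interiors form an open cover of \(\SE(F)\). Compactness gives finitely many points \(\mathbf{x}_1,\dots,\mathbf{x}_n\) whose interiors already cover, so
\[
\SE(F)=\bigcup_{k=1}^{n}\SE(U)[\mathbf{x}_k].
\]
This is precisely soft total boundedness.

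\emph{Completeness.} Let \(\mathfrak F\) be a soft Cauchy filter on \(\SE(F)\).
\begin{enumerate}[(1)]
\item \emph{Cluster point.} The family \(\{\operatorname{cl}_{\TU}M: M\in\mathfrak F\}\) consists of closed sets with the finite intersection property, since \(\mathfrak F\) is proper. By compactness there is a point \(\mathbf{p}\) lying in every \(\operatorname{cl}_{\TU}M\).
\item \emph{Convergence.} It suffices to show \(\SE(U)[\mathbf{p}]\in\mathfrak F\) for each \(U\in\sU\), because these sets form a neighbourhood base at \(\mathbf{p}\). Choose a symmetric \(V\in\sU\) with \(V\circ V\sqsubseteq U\), as in the regularity proof, using \(W\cap W^{-1}\). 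By the Cauchy condition pick \(M\in\mathfrak F\) with \(M\times M\subseteq\SE(V)\).
\item \emph{Linking \(M\) to \(\mathbf{p}\).} Since \(\mathbf{p}\in\operatorname{cl}_{\TU}M\) and \(\SE(V)[\mathbf{p}]\) is a neighbourhood of \(\mathbf{p}\), there is \(\mathbf{z}\in M\cap\SE(V)[\mathbf{p}]\).
\item \emph{Absorption.} For any \(\mathbf{y}\in M\) we have \((\mathbf{p},\mathbf{z})\in\SE(V)\) and \((\mathbf{z},\mathbf{y})\in\SE(V)\). Hence \((\mathbf{p},\mathbf{y})\in\SE(V)\circ\SE(V)\subseteq\SE(V\circ V)\subseteq\SE(U)\), using the earlier lemma on \(\SE\) of compositions.
\item \emph{Conclusion.} Thus \(M\subseteq\SE(U)[\mathbf{p}]\), so \(\SE(U)[\mathbf{p}]\in\mathfrak F\) by upward closure, and \(\mathfrak F\to\mathbf{p}\) in \(\TU\).
\end{enumerate}

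The only delicate point is the composition step (4). There one has to keep track of which coordinate of \(\SE(V)\) each pair sits in. Symmetry of \(V\) is not actually needed, since the pairs already appear in the right order, but choosing \(V\) symmetric removes any worry. One must also invoke \(\SE(V)\circ\SE(V)\subseteq\SE(V\circ V)\) instead of an equality, which is all the lemma provides. Everything else is the standard compact-implies-complete-and-totally-bounded argument transported via Theorem~\ref{thm:14}.
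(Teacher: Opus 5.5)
Your proof is correct and follows the paper's argument essentially step by step. Total boundedness comes from a finite subcover of the interiors of \(\SE(U)[\mathbf{x}]\), and completeness from a cluster point \(\mathbf{p}\) (via the finite intersection property of closures) followed by the absorption \(M\subseteq\SE(U)[\mathbf{p}]\) through a symmetric square root \(V\) of \(U\); your remark that symmetry of \(V\) is not actually needed in that composition step is also right.
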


\begin{proof}
Let \(U\in\sU\). The interiors of \(\SE(U)[\mathbf{x}]\), as \(\mathbf{x}\) ranges over \(\SE(F)\), form an open cover. A finite subcover gives
\[
 \SE(F)=\bigcup_{k=1}^{n}\SE(U)[\mathbf{x}_k],
\]
so the space is soft totally bounded.

Let \(\mathfrak F\) be a soft Cauchy filter. Compactness gives a cluster point \(\mathbf{p}\) of \(\mathfrak F\), since the closures of its members have the finite-intersection property. Fix \(U\in\sU\) and choose a symmetric \(V\in\sU\) with \(V\circ V\sqsubseteq U\). There exists \(M\in\mathfrak F\) with \(M\times M\subseteq \SE(V)\). Since \(\mathbf{p}\) is a cluster point, \(M\cap \SE(V)[\mathbf{p}]\neq\varnothing\); choose \(\mathbf{q}\) in this intersection. For every \(\mathbf{y}\in M\),
\[
 (\mathbf{p},\mathbf{q})\in \SE(V),\qquad (\mathbf{q},\mathbf{y})\in \SE(V),
\]
so \((\mathbf{p},\mathbf{y})\in \SE(U)\). Thus \(M\subseteq \SE(U)[\mathbf{p}]\), and upward closure gives \(\SE(U)[\mathbf{p}]\in\mathfrak F\). This holds for every \(U\), so \(\mathfrak F\) converges to \(\mathbf{p}\).
\end{proof}

\section{Examples}

\begin{example}[Discrete soft uniformity]
Let \(F\) be a soft set with \(\SE(F)\neq\varnothing\) and define
\[
 \sU_{\mathrm{disc}}=\{U\in\Rel(F):\Diag_F\sqsubseteq U\}.
\]
This is a soft uniformity. Since \(\Diag_F\in\sU_{\mathrm{disc}}\), the soft-element topology on \(\SE(F)\) is discrete. Every soft mapping from this space into any soft uniform space is soft uniformly continuous: for a target entourage, use \(\Diag_F\) in the domain.
\end{example}

\begin{example}[Soft uniformity induced by a family of metrics]
For each \(e\in E\), let \(d_e\) be a metric on \(F(e)\). For \(\varepsilon>0\), set
\[
 H_{\varepsilon}(e)=\{(a,b)\in F(e)^2:d_e(a,b)<\varepsilon\}.
\]
The family \(\{H_{\varepsilon}:\varepsilon>0\}\) is a base of soft entourages, since \(H_{\varepsilon/2}\circ H_{\varepsilon/2}\sqsubseteq H_{\varepsilon}\). Every section has its metric topology, while a basic soft-element ball is
\[
 \SE(H_{\varepsilon})[\mathbf{x}]=\prod_{e\in E}B_{d_e}(x_e,\varepsilon),
\]
so one common \(\varepsilon\) controls all parameters. For finite \(E\), this is the uniformity of the maximum metric on \(\prod_eF(e)\).
\end{example}

\begin{example}[A two-parameter compact soft uniform space]
Let \(E=\{1,2\}\), let \(F(1)=F(2)=[0,1]\), and use the metric-generated soft uniformity of the preceding example. Then \(\SE(F)=[0,1]^2\) with the maximum-metric topology, hence \(F\) is soft compact. Define \(f_1(t)=t^2\) and \(f_2(t)=t^3\). For \(a,b\in[0,1]\),
\[
 |a^2-b^2|\leq2|a-b|,\qquad |a^3-b^3|\leq3|a-b|.
\]
Thus \(H_{\varepsilon/3}\) in the domain is mapped into \(H_{\varepsilon}\) in the codomain. The example shows a single entourage that controls two different coordinate mappings.
\end{example}

\begin{example}[Sectionwise compactness is not sufficient]
\label{ex: 36}
Let \(E=\mathbb N\) and \(F(n)=\{0,1\}\) for every \(n\). Give \(F\) the discrete soft uniformity. Each section is a finite compact discrete space. However,
\[
 \SE(F)=\{0,1\}^{\mathbb N}
\]
has the discrete topology because \(\SE(\Diag_F)=\Diag_{\SE(F)}\) is an entourage. Since \(\SE(F)\) is infinite, it is not compact. Hence compactness of all parameter sections does not imply soft compactness when the parameter set is infinite.
\end{example}

\begin{example}[Soft continuous but not soft uniformly continuous]
Let \(E=\{e_0\}\), \(F(e_0)=(0,4)\), and \(G(e_0)=(0,\infty)\), with their usual metric uniformities. Define \(f_{e_0}(x)=1/x\). The map is soft-element continuous, which here is ordinary continuity, but it is not soft uniformly continuous. Indeed, \(|1/n-2/n|=1/n\to0\), while
\[
 \left|f_{e_0}(1/n)-f_{e_0}(2/n)\right|=n/2.
\]
\end{example}

\section{Conclusion}

The paper develops a relation-based framework for soft uniform spaces and distinguishes the two topologies that arise from it. The family \(\tau_{\sU}\) is a sectionwise soft topology on \(F\), while \(\TU\) is the ordinary topology induced on the soft-element set \(\SE(F)\). The separation and regularity results concern \((\SE(F),\TU)\): this space is Hausdorff, equivalently \(T_1\), exactly when the soft uniformity is separated, and it is regular for every soft uniformity.

The paper also studies soft uniform continuity and proves a Heine--Cantor type theorem when the domain space \((\SE(F),\TU)\) is compact. Soft total boundedness and soft completeness were formulated as total boundedness and completeness of the canonical uniform space \((\SE(F),\sU_{\SE})\). Under these definitions, compactness of \((\SE(F),\TU)\) implies both properties.

There are several natural continuations. 
A completion theory for soft uniform spaces, including existence, uniqueness and functoriality, would be valuable. 
It would also be useful to study product and quotient constructions for soft uniformities in a systematic way. 
Finally, since our completeness notion is filter-based, it would be interesting to compare it in detail with recent work on soft filters, soft nets and soft ideals in \cite{GoldarRaySarkar2025Filters,GoldarRaySarkar2025Ideals}, and to clarify when parameterwise completeness can be recovered from soft completeness.

\end{document}